\newtheorem{theorem}{Theorem}[section]
\newtheorem{lemma}[theorem]{Lemma}
\newtheorem{proposition}[theorem]{Proposition}
\newtheorem{corollary}[theorem]{Corollary}
\theoremstyle{definition}
\newtheorem{definition}[theorem]{Definition}
\numberwithin{equation}{section}
\numberwithin{equation}{section}
\tikzset{join/.code=\tikzset{after node path={%
\ifx\tikzchainprevious\pgfutil@empty\else(\tikzchainprevious)%
edge[every join]#1(\tikzchaincurrent)\fi}}} \makeatother
\tikzset{>=stealth',every on chain/.append style={join},every
join/.style={->}}
\newcommand{\OB}{\mathcal{OB}}
\def\CP{\hbox{${\Bbb C} P^2$}}
\def\CPb{\hbox{$\overline{{\Bbb C}P^2}$}}
\def\v{\vskip.12in}
\def\a{\alpha}
\def\b{\beta}
\def\g{\gamma}
\begin{document}

\title[]{Singularity links with exotic Stein fillings}

\author[Akhmedov and Ozbagci]{Anar Akhmedov and Burak Ozbagci}

\address{School of Mathematics, University of Minnesota, Minneapolis, MN 55455, USA, \newline akhmedov@math.umn.edu}

\address{Department of Mathematics, Ko\c{c} University, Sariyer, 34450, Istanbul,
Turkey, \newline bozbagci@ku.edu.tr}



\begin{abstract}

In \cite{aems}, it was shown that there exist infinitely many
contact Seifert fibered $3$-manifolds each of which admits
infinitely many exotic (homeomorphic but pairwise non-diffeomorphic)
simply-connected Stein fillings. Here we extend this result to a
larger set of contact Seifert fibered $3$-manifolds with many
singular fibers and observe that these $3$-manifolds are singularity
links.  In addition, we prove that the contact  structures induced
by the Stein fillings are the canonical contact structures on these
singularity links.  As a consequence, we verify a prediction of
Andr\'{a}s N\'{e}methi \cite{nem} by providing examples of isolated
complex surface singularities whose links with their canonical
contact structures admitting infinitely many exotic simply-connected
Stein fillings. Moreover,  for infinitely many  of these contact
singularity links and for each positive integer $n$, we also
construct an infinite family of exotic Stein fillings with fixed
fundamental group $\mathbb{Z} \oplus \mathbb{Z}_n$.

\end{abstract}

\maketitle

\section{Introduction}
\label{sec:intro}

The link of a normal complex surface singularity  carries a Milnor
fillable (also known as canonical) contact structure $\xi_{can}$
which is uniquely determined up to isomorphism \cite{cnp}.  A Milnor
fillable contact structure is Stein fillable since a regular
neighborhood of the exceptional divisor in a resolution of the
surface singularity provides a holomorphic filling which can be
deformed to be a blow-up of a Stein surface without changing the
contact structure $\xi_{can}$ on the boundary \cite{bd}.  Moreover,
if a singularity admits a smoothing then the corresponding Milnor
fiber is also a Stein filling of $\xi_{can}$.

In this paper, we generalize the main result in \cite{aems} to a
larger family of contact Seifert fibered $3$-manifolds admitting
many singular fibers. We also observe an additional feature of these
contact $3$-manifolds: They appear as the links of some isolated
complex surface singularities, and the contact  structures are the
 canonical ones on these singularity links.  The following theorems verify a prediction of N\'{e}methi \cite{nem} filling a
gap in the literature.\\

\noindent{\bf Theorem 4.4.} \emph{There exist infinitely many
Seifert fibered singularity links each of which admits infinitely
many exotic simply-connected
Stein fillings of its canonical contact structure.} \\

\noindent{\bf Theorem 5.3.} \emph{There exists an infinite family of
Seifert fibered singularity links such that for each positive
integer $n$, each member of this family equipped with its canonical
contact structure admits infinitely many exotic Stein fillings whose
fundamental group is $\mathbb{Z}
\oplus \mathbb{Z}_{n}$.} \\

One should contrast our result with what is known for links of some
other isolated complex surface singularities. For example, Ohta and
Ono showed that the diffeomorphism type of any minimal strong symplectic filling of the link of
a simple singularity is unique which implies that the minimal resolution of the singularity is diffeomorphic to the Milnor fiber \cite{oo2}.  They also showed
that any minimal strong symplectic filling of the link of a simple elliptic singularity  is diffeomorphic
either to the minimal resolution or to the Milnor fiber of  the smoothing of the singularity  \cite{oo1} .

Moreover, Lisca showed that the canonical contact structure on a lens space (the
oriented link of some cyclic quotient singularity) has only
\emph{finitely} many distinct Stein fillings, up to diffeomorphism \cite{l} (see also earlier work of
McDuff \cite{mc}). Recently, it was
shown that these Stein fillings correspond bijectively to the Milnor fibres
coming from all possible distinct smoothings of the singularity \cite{npp}.

In summary, in all the previously studied examples in the literature, it was shown that an isolated complex surface singularity with its canonical contact structure admits finitely many diffeomorphism types of Stein fillings such that  each Stein filling is diffeomorphic
either to the minimal resolution or to the Milnor fiber of one of
the smoothings of the singularity.

We should mention that in \cite[Theorem 1.2]{oo} Ohta and Ono showed
the existence of singularity links which admit infinitely many
distinct minimal symplectic fillings distinguished by their
$b^{+}_2$. These fillings, however, are \emph{not necessarily Stein
or simply-connected}.

On the other hand, using log transforms, Akbulut and Yasui
\cite[Theorem 1.1]{ay} constructed contact $3$-manifolds admitting
infinitely many exotic simply-connected Stein fillings with $b_2=2$,
inspired by an earlier paper by Akbulut \cite{ak}. In these
articles, however,  the contact $3$-manifolds in question are
\emph{not singularity links}.

Finally, we would like to point out that in \cite{ako}, \emph{using
very different methods}, we were able to prove the statement of
Theorem 5.3 by replacing  $\mathbb{Z} \oplus \mathbb{Z}_{n}$ by any
finitely presented group $G$.

\section{Milnor fillable contact structures on Seifert fibered $3$-manifolds}
\label{sec:milseif}

In this section we identify the isomorphism class of the canonical
contact structure on a singularity link which admits a Seifert
fibration.  A topological characterization of such $3$-manifolds was
given by Neumann \cite{neu}: A closed and oriented Seifert fibered
$3$-manifold is a singularity link if and only if it has a Seifert
fibration over an orientable base such that the Euler number of this
fibration is negative.


\begin{proposition}  \label{mil} The isomorphism class of the Milnor fillable
contact structure $\xi_{can}$ on a closed and oriented $3$-manifold
$Y$ which has a Seifert fibration with negative Euler number over an
orientable base coincides with the unique isomorphism class of the
$S^1$-invariant transverse contact structures.
\end{proposition}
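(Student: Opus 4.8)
The plan is to exploit the fact, guaranteed by \cite{cnp}, that the isomorphism class of $\xi_{can}$ depends only on the oriented $3$-manifold $Y$ and not on the particular singularity realizing it as a link. This frees me to choose the most convenient analytic representative. Since $Y$ carries a Seifert fibration over an orientable base with negative Euler number, by the work of Neumann, Orlik--Wagreich and Pinkham it is orientation-preservingly diffeomorphic to the link of a \emph{weighted homogeneous} isolated surface singularity $(V,0)\subset(\mathbb{C}^N,0)$; moreover the Seifert fibration is recovered, up to isotopy, from the restriction to $Y = V\cap S^{2N-1}_\epsilon$ of the circle action $S^1\subset\mathbb{C}^*$ underlying the weighted $\mathbb{C}^*$-action $t\cdot(z_1,\dots,z_N)=(t^{w_1}z_1,\dots,t^{w_N}z_N)$ with positive integer weights $w_i$. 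Thus it suffices to compare $\xi_{can}$ with the transverse invariant contact structures for this specific model.

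Next I would verify directly that $\xi_{can}$ is an $S^1$-invariant contact structure transverse to the fibers of this model. Writing $\rho=\sum|z_i|^2$, the canonical contact structure is the field of complex tangencies $\xi_{can}=TY\cap J(TY)=\ker\alpha$, where $\alpha=-\iota^*d^c\rho$ is, up to a positive scaling, the standard Stein contact form on the link. The $S^1$-action is holomorphic and preserves $\rho$, hence it preserves both $J$ and $\alpha$, so $\xi_{can}$ is $S^1$-invariant. For transversality I would check that the vector field $R$ generating the weighted $S^1$-action satisfies $\alpha(R)>0$ pointwise on $Y$; indeed $R$ is, after normalization, a Reeb vector field for a weighted analogue of $\alpha$, and the positivity of the weights forces the fibers to be positively transverse to the contact planes. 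This exhibits $\xi_{can}$ as a member of the family of $S^1$-invariant transverse contact structures.

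It then remains to prove that this family consists of a single isomorphism class, and this is the step I expect to be the main obstacle. I would argue that the space of $S^1$-invariant contact structures positively transverse to the Seifert fibration is non-empty and path-connected. Away from the finitely many singular fibers $Y$ is a principal $S^1$-orbifold bundle over the base orbifold $\Sigma$, and an invariant transverse contact form is equivalent to an orbifold connection $1$-form whose curvature is a positive orbifold area form on $\Sigma$, together with a positive fiber-scaling function. The de Rham class of this area form is pinned, up to scale, by the orbifold Euler number of the fibration, and its being a \emph{positive} form is exactly what the negativity of the Euler number makes possible. Since positive area forms in a fixed cohomology class form a convex, hence connected, set, the space of invariant transverse contact structures is connected. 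Applying the $S^1$-equivariant version of Gray's stability theorem along a path then produces an equivariant isotopy carrying any one such structure to any other, so in particular each is isomorphic to $\xi_{can}$.

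The delicate points I would need to handle carefully are the orbifold bookkeeping near the singular fibers, making sure the connection/curvature description and the convexity argument survive the orbifold singularities, and confirming that the equivariant Gray stability argument applies uniformly in the presence of several singular fibers, since these are precisely the features that distinguish the present setting from the smooth circle-bundle case.
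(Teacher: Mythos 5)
Your argument is essentially correct, but it takes a genuinely different route from the paper's. The paper's proof is a short chain of citations: $\xi_{can}$ is universally tight \cite{lo}; by \cite[Corollary 4]{Ma} such a structure on $Y$ is, up to isotopy, invariant and transverse for a locally free $S^1$-action; structures that are both invariant and transverse exist precisely when the Euler number is negative and form a single isomorphism class \cite{kt,lm,Ma}; and $\xi_{can}$ is unique up to isomorphism \cite{cnp}. You bypass universal tightness and Massot's classification entirely: you invoke the Orlik--Wagreich/Pinkham realization of $Y$ (with its given Seifert fibration and negative Euler number) as the link of a normal singularity with good $\mathbb{C}^*$-action, use the analytic-structure independence of $\xi_{can}$ from \cite{cnp} to reduce to that model, check directly that the field of complex tangencies is invariant and positively transverse to the weighted circle action (the computation $\alpha(R)=\sum w_i|z_i|^2>0$ is correct), and then reprove uniqueness of invariant transverse structures via the normalization $\alpha(R)\equiv 1$, convexity of connection forms with positive basic curvature, and Gray stability --- which is in substance how the cited sources establish that uniqueness. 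Your approach is more constructive and self-contained, at the cost of needing the equivariant realization theorem and the orbifold bookkeeping near singular fibers that you correctly flag; the paper's approach is shorter given the literature and never has to match the Seifert fibration to a $\mathbb{C}^*$-action. Both arguments ultimately rest on \cite{cnp} for the uniqueness of the Milnor fillable contact structure. I see no genuine gap in your outline, only details to be written out.
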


\begin{proof} It is known that any Milnor fillable contact structure $\xi_{can}$ on a singularity link is universally tight \cite{lo}.
According to \cite[Corollary 4]{Ma}, there exist a locally free
$S^1$-action on $Y$ such that $\xi_{can}$ is either transverse to
the orbits or invariant under the $S^1$-action. Moreover, a contact
structure which is both invariant and transverse to the orbits of a
locally free $S^1$-action exists on a Seifert fibered $3$-manifold
$Y$ exactly when the Euler number of $Y$ is negative (cf. \cite{kt,
lm}). Furthermore, there is only one isomorphism class of such
contact structures as indicated in the last paragraph on page 1356
in \cite{Ma} and hence the result follows since a Milnor fillable
contact structure is unique up to isomorphism \cite{cnp}.
\end{proof}

\section{Extending diffeomorphisms}\label{exte}

Let $\overline{p}=(p_1,p_2, \ldots, p_r)$ denote an $r$-tuple of
\emph{positive} integers and let $\Sigma_h$ denote a closed oriented
surface of genus $h \geq 0$. Let $Z_{h, \overline{p}}$ denote the
oriented smooth $4$-manifold-with-boundary obtained by plumbing
oriented disk bundles according to the star-shaped graph with $r+1$
vertices described as follows: The central vertex represents
$\Sigma_h \times D^2$ and if we label the remaining $r$ vertices by
$i=1, \ldots, r$, the $i$th vertex---connected by an edge to the
central vertex---represents a $D^2$-bundle over $S^2$ whose Euler
number is $-p_i$.

\begin{proposition} \label{ext} Any orientation preserving self-diffeomorphism of
$\partial Z_{h, \overline{p}}$ extends over $Z_{h, \overline{p}}$.
\end{proposition}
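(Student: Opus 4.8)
The plan is to reduce the statement to extending a generating set of the orientation-preserving mapping class group of $Y := \partial Z_{h,\overline p}$, using two elementary closure properties of the class of extendable diffeomorphisms. First, if $f$ and $g$ both extend over $Z_{h,\overline p}$, then so does $f\circ g$. Second, if $f$ is isotopic to a diffeomorphism that extends, then $f$ itself extends, since one can absorb the isotopy into a collar $Y\times[0,1]\subset Z_{h,\overline p}$. Thus it suffices to produce an extension for each element of a chosen generating set of $\mathrm{MCG}^+(Y)$.

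First I would identify $Y$ as the Seifert fibered $3$-manifold over $\Sigma_h$ with $r$ exceptional fibers (of multiplicities $p_1,\dots,p_r$) and negative Euler number determined by the plumbing graph. Because the base orbifold is large in the cases of interest (its orbifold Euler characteristic is negative, there being many singular fibers), the Seifert fibration is unique up to isotopy, and Waldhausen's theorem allows one to isotope any orientation-preserving self-diffeomorphism of $Y$ to a fiber-preserving one. A fiber-preserving diffeomorphism covers an orientation-compatible diffeomorphism of the base orbifold and, modulo the vertical diffeomorphisms (those covering the identity of the base), is governed by the orbifold mapping class group. The latter is generated by Dehn twists and point-pushing maps supported on $\Sigma_h$, by permutations of the cone points of equal order, and, in the case covering an orientation-reversing map of the base, by a single reflection; the vertical part is generated by the global fiber rotation together with the vertical Dehn twists along vertical tori lying over embedded curves of the base.

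I would then extend each generator explicitly, exploiting the disk-bundle structure of $Z_{h,\overline p}$. Writing the central piece as $C=\Sigma_h\times D^2$, a Dehn twist $\tau_\gamma$ along a curve $\gamma\subset\Sigma_h$ disjoint from the plumbing regions extends as $\tau_\gamma\times\mathrm{id}_{D^2}$ over $C$ and by the identity over each leaf; a point-pushing map extends by dragging the relevant leaf, together with its plumbing tube, around the loop inside $\Sigma_h\times D^2$; a permutation of two leaves with $p_i=p_j$ extends by the ambient diffeomorphism of $Z_{h,\overline p}$ interchanging those two identical $D^2$-bundle summands; the global fiber rotation extends as the fiberwise $S^1$-action of the disk-bundle configuration, which restricts to the Seifert $S^1$-action on $Y$; and a vertical Dehn twist over $\gamma$ extends over $C$ by rotating the $D^2$-fibers through an angle that winds once as one crosses an annular neighborhood of $\gamma$. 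The orientation-reversing generator is extended by pairing an orientation-reversing diffeomorphism of $\Sigma_h$ with complex conjugation on the $D^2$-fibers, which preserves each Euler number $-p_i$ and hence respects the plumbing.

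The main obstacle is the reduction step rather than the individual extensions: one must be sure the Seifert fibration on $Y$ is unique up to isotopy, so that an arbitrary diffeomorphism can be made fiber-preserving, and then that the generating list above is complete, in particular that no vertical mapping classes concentrated near the exceptional fibers have been overlooked. The degenerate small-base cases, in which the fibration need not be unique (for instance when $Y$ is a lens space), are treated directly. Once the generating set is pinned down, each extension is the routine fiberwise construction indicated above, and the two closure properties then yield the extension of an arbitrary orientation-preserving self-diffeomorphism.
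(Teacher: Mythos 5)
Your strategy---reduce to a generating set of $\pi_0\mathrm{Diff}^+(\partial Z_{h,\overline p})$ and extend the generators one at a time---is genuinely different from the paper's. The paper decomposes $Z_{h,\overline p}$ itself rather than the mapping class group: it first extends the given diffeomorphism over the trivial piece $\accentset{\circ}{\Sigma}_h\times D^2$ (with $r$ disks removed from $\Sigma_h$), quoting the argument of \cite[Lemma 3.1]{aems} for that step verbatim, and then completes the extension over each remaining $D^2$-bundle over $S^2$ by invoking Bonahon's computation \cite{bo} of the mapping class groups of the lens spaces $L(p_i,1)$. Your route could in principle be made to work, but as written it contains two genuine gaps, both of which you flag and neither of which you close.

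First, the completeness of your generating set is exactly the hard point, and you supply neither a proof nor a reference. The statement that every orientation-preserving diffeomorphism is isotopic to a fiber-preserving one, and the identification of the resulting extension of the orbifold mapping class group by the vertical classes, is a nontrivial theorem (Waldhausen/Johannson in the Haken case), and the vertical subgroup---in particular possible classes supported near the exceptional fibers, and whether the fiber rotation is isotopically trivial---is precisely where an overlooked generator would be fatal; you name this worry yourself but do not resolve it. Second, the degenerate cases are asserted to be ``treated directly'' with no treatment given, yet the proposition is stated for all $h\ge 0$ and all positive $p_i$, so $\partial Z_{h,\overline p}$ can be a lens space ($h=0$, $r\le 2$) or a small Seifert fibered space ($h=0$, $r=3$); there the fibration is not unique, the manifold need not be Haken so Waldhausen does not apply, and one must instead take the mapping class group from Bonahon and check by hand that each of its generators extends over the specific linear plumbing. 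That verification is essentially the substantive content of the paper's appeal to \cite{bo}, not a routine afterthought. Until the generating set is pinned down with a citation (or argument) covering every Seifert geometry that actually occurs, the proof is incomplete.
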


\begin{proof} We sketch the proof of this proposition which is a simple extension of the proof
of \cite[Lemma 3.1]{aems}, where the case $r = 1$ was treated in
full details. The strategy of the proof there, was to find the
required extension in two steps, where the first step was to find an
extension to the part $\accentset{\circ}{\Sigma_h} \times D^2$ of
the plumbing and then complete the extension on the remaining part.
Here $\accentset{\circ}{\Sigma_h}$ denotes $\Sigma_h$ with a disk
removed.

In order to prove our result, we apply the same strategy, where we
remove several disks from $\Sigma_h$ and the second paragraph in the
proof of \cite[Lemma 3.1]{aems} works verbatim as the initial step.
To complete the extension to the $r$ disk-bundles over $S^2$, we
rely on  a result of Bonahon \cite{bo}, since the boundary of the
oriented $D^2$-bundle over $S^2$ with Euler number $-p_i$ is the
oriented $S^1$-bundle over $S^2$ with the same Euler number, which
in turn, is orientation preserving diffeomorphic to the lens space
$L(p_{i},1)$.
\end{proof}

\section{Singularity links with simply-connected exotic Stein
fillings}\label{iso}

The boundary $\partial Z_{h, \overline{p}}$ has an orientation
induced from  the orientation on the smooth
$4$-manifold-with-boundary $Z_{h, \overline{p}}$ described in
Section~\ref{exte}. Let $Y_{h,\overline{p}}$ denote $\partial Z_{h,
\overline{p}}$  with the \emph{opposite} orientation. In other
words, $Y_{h,\overline{p}}$ is the closed, oriented $3$-manifold
which is obtained by plumbing of oriented circle bundles according
to the star-shaped graph with $r+1$ vertices as illustrated on the
left in Figure~\ref{star}: The central vertex represents $\Sigma_h
\times S^1$  and if we label the remaining $r$ vertices by $i=1,
\ldots, r$, the $i$th vertex---connected by an edge to the central
vertex---represents an $S^1$-bundle over $S^2$ whose Euler number is
$p_i$.

\begin{lemma}\label{sin} The $3$-manifold $Y_{h,\overline{p}}$ is the link of an
isolated complex surface singularity.

\end{lemma}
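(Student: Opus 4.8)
The plan is to identify $Y_{h,\overline{p}}$ as a Seifert fibered $3$-manifold, read off its Seifert data from the plumbing graph, and then invoke Neumann's topological characterization of Seifert fibered singularity links recalled in Section~\ref{sec:milseif}: such a manifold is the link of a singularity precisely when it admits a Seifert fibration over an orientable base whose Euler number is negative \cite{neu}. Thus the whole argument reduces to exhibiting one such fibration on $Y_{h,\overline{p}}$.

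First I would observe that the defining plumbing graph of $Y_{h,\overline{p}}$ is star-shaped, with a single central vertex of genus $h$ and weight $0$ (coming from $\Sigma_h \times S^1$) and $r$ arms, each a single genus-$0$ vertex of weight $p_i$ (coming from the $S^1$-bundle over $S^2$ of Euler number $p_i$). By the standard dictionary between star-shaped plumbings of circle bundles and Seifert fibered spaces, $Y_{h,\overline{p}}$ carries a Seifert fibration whose base orbifold has underlying surface $\Sigma_h$, with one exceptional fiber produced by each arm. In particular the base is the closed orientable surface of genus $h$, so the orientability hypothesis of Neumann's criterion holds automatically and the problem is reduced entirely to a sign computation.

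The substantive step is therefore to compute the Euler number from the graph and to verify that it is negative. Each weight-$p_i$ arm contributes an exceptional fiber of type $(p_i,1)$, while the weight-$0$ central vertex contributes nothing, so that
\[
e(Y_{h,\overline{p}}) = -\sum_{i=1}^{r}\frac{1}{p_i}.
\]
Since every $p_i$ is a positive integer, the right-hand side is strictly negative, and Neumann's criterion then gives at once that $Y_{h,\overline{p}}$ is the link of an isolated complex surface singularity. The point behind the sign is the defining choice of orientation: $Y_{h,\overline{p}}$ is $\partial Z_{h,\overline{p}}$ with the \emph{reversed} orientation, and it is precisely this reversal that makes the Euler number negative.

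I expect the main obstacle to be the careful bookkeeping of orientation and sign conventions in the Euler-number computation, rather than any deep difficulty. The central vertex carries the degenerate weight $0$ and the $p_i$ may equal $1$ (so that an arm is the Hopf-fibered $S^3$ and the corresponding fiber is not genuinely exceptional), and one must track the passage from the boundary orientation of $Z_{h,\overline{p}}$ to the reversed orientation of $Y_{h,\overline{p}}$ consistently through the plumbing-to-Seifert dictionary. I would pin these conventions down by first checking the smallest case $h=0$, $r=1$ — where $Y_{0,(p)}$ is simply $S^3$ equipped with the Seifert fibration of Euler number $-1/p$, manifestly a singularity link — and then confirming that the dictionary applies verbatim when the central vertex has positive genus $h$.
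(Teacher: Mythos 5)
Your proposal is correct, but it takes a genuinely different route from the paper. You invoke Neumann's topological characterization (already quoted in Section~\ref{sec:milseif}): exhibit the Seifert fibration of $Y_{h,\overline{p}}$ over $\Sigma_h$ coming from the star-shaped graph and check that its Euler number $e=-\sum_{i=1}^r 1/p_i$ is negative. The paper instead stays inside plumbing calculus: it blows the graph up and down until it reaches the star-shaped graph with central weight $-r$ (genus $h$) and $r$ legs of $p_i-1$ vertices of weight $-2$, observes that the resulting intersection matrix is negative definite, and applies Grauert's criterion directly. The two arguments are of course cousins -- Neumann's theorem is itself proved by reducing to Grauert -- but they buy different things. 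Your version is shorter, avoids the Kirby-calculus manipulation, and the Euler-number computation you carry out is in any case implicitly needed for Lemma~\ref{can}, which appeals to Proposition~\ref{mil} and hence to the negativity of $e$; note that your computation ($e=0+\sum(-1/p_i)$ from the left-hand graph, or equivalently $-r+\sum(p_i-1)/p_i$ from the right-hand one) confirms the consistency of the two descriptions. The paper's version, on the other hand, produces an explicit negative definite resolution graph for the singularity, which is more concrete information than the mere existence statement. One small caveat on your write-up: the leg of weight $+p_i$ contributes a fiber of type $(p_i,-1)$ (equivalently $(p_i,p_i-1)$ after normalization), not $(p_i,1)$; this does not affect the contribution $-1/p_i$ to the Euler number, but you should fix the convention before claiming the formula.
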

\begin{proof}

The $3$-manifold $Y_{h,\overline{p}}$ is obtained by plumbing of
circle bundles according to the star-shaped graph illustrated on the
left in Figure~\ref{star} with $r+1$ vertices, where the weight on a
vertex represents the Euler number of the corresponding oriented
circle bundle as usual.

\begin{figure}[ht]
  \relabelbox \small {
  \centerline{\epsfbox{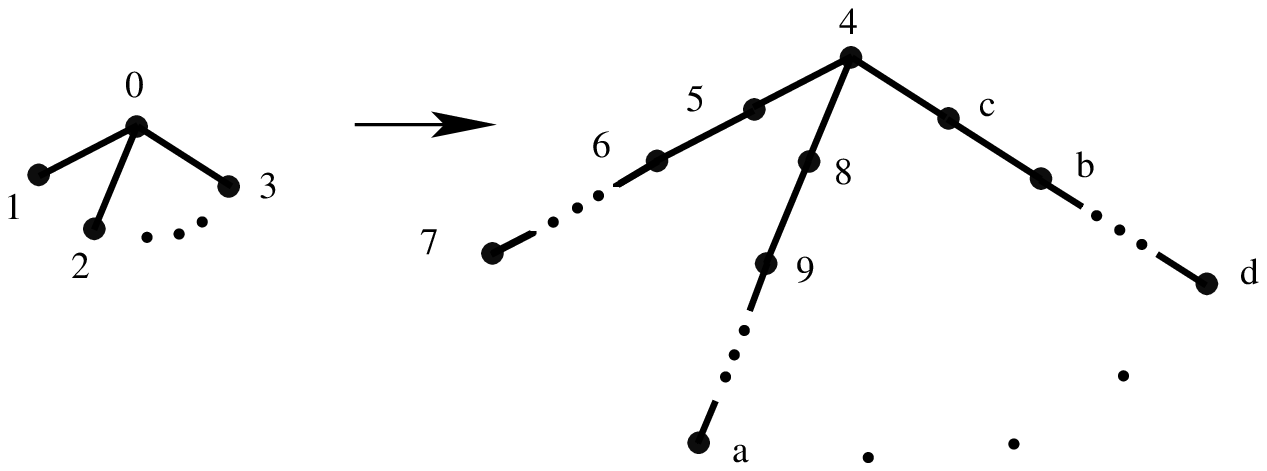}}}

\relabel{0}{$0$}

\relabel{1}{$p_1$}

\relabel{2}{$p_2$}

\relabel{3}{$p_r$}

\relabel{4}{$-r$}

\relabel{5}{$-2$}

\relabel{6}{$-2$}

\relabel{7}{$-2$}

\relabel{8}{$-2$}

\relabel{9}{$-2$}

\relabel{a}{$-2$}

\relabel{b}{$-2$}

\relabel{c}{$-2$}

\relabel{d}{$-2$}

\endrelabelbox
        \caption{All except the central vertex
represent circle bundles over $S^2$.}
        \label{star}
\end{figure} By blowing up and down this plumbing graph several times we see that
$Y_{h,\overline{p}}$ is orientation-preserving diffeomorphic to the
$3$-manifold given by the star-shaped plumbing graph depicted on the
right in Figure~\ref{star}, where there are $r$ legs emanating from
the central vertex of weight $-r$ (and genus $h$) and the $i$-th leg
is a chain of $p_i -1$ vertices (excluding the central vertex) each
with weight $-2$.
 Since the intersection matrix of
this last graph is negative definite, we conclude that
$Y_{h,\overline{p}}$ is orientation-preserving diffeomorphic to the
link of a normal and hence isolated surface singularity by Grauert's
theorem. \end{proof}

Let $\OB_{h, \overline{p}}$ denote the open book on $Y_{h,
\overline{p}}$ whose page is a genus $h \geq 0$ surface with $r \geq
1$ boundary components and monodromy is given as $$t_1^{p_1}
t_2^{p_2} \ldots t_r^{p_r}$$ where $t_i$ is a right-handed Dehn
twist along a curve parallel to the $i$-th  boundary component and
let $\xi_{h,\overline{p}}$ denote  the contact structure which is
supported by $\OB_{h, \overline{p}}$.

\begin{lemma} \label{can} The contact structure $\xi_{h,\overline{p}}$ is
the canonical contact structure on $Y_{h,\overline{p}}$.

\end{lemma}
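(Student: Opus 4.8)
The plan is to reduce the statement to Proposition~\ref{mil}. By Lemma~\ref{sin} the $3$-manifold $Y_{h,\overline{p}}$ is a singularity link, so Neumann's characterization (recalled in Section~\ref{sec:milseif}) provides a Seifert fibration of $Y_{h,\overline{p}}$ with negative Euler number over the orientable base $\Sigma_h$. Proposition~\ref{mil} then identifies $\xi_{can}$ with the unique isomorphism class of contact structures that are simultaneously $S^1$-invariant and transverse to the fibers of this fibration. Hence it suffices to prove that $\xi_{h,\overline{p}}$ can be represented by an $S^1$-invariant contact form transverse to the Seifert fibers; the Giroux correspondence, which determines the contact structure supported by $\OB_{h,\overline{p}}$ up to isotopy, will then close the argument.

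The heart of the matter is to read off the Seifert fibration directly from the open book $\OB_{h,\overline{p}}$, realizing it as a \emph{horizontal} open book. I would first observe that, since the monodromy $t_1^{p_1}t_2^{p_2}\cdots t_r^{p_r}$ is a product of Dehn twists supported in collar neighborhoods of the boundary of the page $P$, its mapping torus restricted to the complement $P'$ of these collars is the trivial circle bundle $P'\times S^1$ over $\Sigma_h$ with $r$ open disks removed. The $S^1$-factors of this product are the regular fibers, the $r$ binding components are exceptional fibers, and the exponent $p_i$ prescribes the Seifert invariant (in particular the multiplicity) of the $i$-th exceptional fiber. I would then verify that this fibration has orientable base $\Sigma_h$ and negative Euler number, matching the negative definite plumbing obtained by blowing up and down in Lemma~\ref{sin} (Figure~\ref{star}), so that it is indeed the fibration to which Proposition~\ref{mil} applies.

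Finally I would construct the desired contact form. On the product region $P'\times S^1$ one takes a Giroux-compatible form that is independent of the $S^1$-coordinate and whose component along the fiber direction is positive, so that it is at once $S^1$-invariant and positively transverse to the regular fibers; near each binding component one interpolates to a standard $S^1$-invariant model compatible with the $p_i$-fold twisting of the pages into the exceptional fiber. This produces an $S^1$-invariant transverse contact form supported by $\OB_{h,\overline{p}}$, whence by the Giroux correspondence $\xi_{h,\overline{p}}$ is isotopic to it and Proposition~\ref{mil} gives $\xi_{h,\overline{p}}=\xi_{can}$. I expect the main obstacle to be the bookkeeping in this last step: matching the powers $p_i$ to the Seifert invariants with the correct signs and checking that the interpolated model near the exceptional fibers stays a \emph{positive} contact form transverse to the positively oriented fibers. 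One must be especially careful with orientations here, since $Y_{h,\overline{p}}$ carries the orientation opposite to that of $\partial Z_{h,\overline{p}}$.
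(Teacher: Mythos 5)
Your proposal follows the same route as the paper: exhibit $\xi_{h,\overline{p}}$ as an $S^1$-invariant contact structure transverse to the Seifert fibration of $Y_{h,\overline{p}}$ with negative Euler number, and then invoke Proposition~\ref{mil} together with the uniqueness of the Milnor fillable contact structure. The only difference is one of detail rather than strategy: the paper outsources the horizontal open book and the transversality of its supported contact structure to the reference \cite{o}, obtaining $S^1$-invariance by perturbing the contact planes close to the $S^1$-invariant pages, whereas you sketch that construction by hand.
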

\begin{proof}

First we observe that $Y_{h,\overline{p}}$ admits a Seifert
fibration over a closed oriented surface of genus $h$ with $r$
singular fibers with multiplicities $p_1,p_2,\ldots,p_r$,
respectively. Note that an explicit open book transverse to the
fibers of such a Seifert fibration was constructed in \cite{o},
which is indeed isomorphic to the open book $\OB_{h,\overline{p}}$
on $Y_{h,\overline{p}}$. Moreover, it was also shown \cite{o} that
the contact structure supported by this open book is transverse to
the Seifert fibration. Furthermore, it is easy to see that this
contact structure is invariant under the natural $S^1$ action
induced by the fibration. This is because the pages of the open book
are $S^1$-invariant by construction and contact planes can be
perturbed to be arbitrarily close to tangents of the pages by
allowing an isotopy of the contact structure \cite{et}.  Therefore
$\xi_{h,\overline{p}}$ has to be the unique Milnor fillable contact
structure on $Y_{h,\overline{p}}$ by
Proposition~\ref{mil}.\end{proof}

The following was proved in \cite{ao2}:

\begin{proposition} \label{palf} Suppose that the closed $4$-manifold $X$ admits
a genus $h$ Lefschetz fibration over $S^2$ with homologically
nontrivial vanishing cycles. Let $S_1, S_2,\ldots, S_r$ be $r$
disjoint sections of this fibration with squares $-p_1,-p_2,\ldots,
-p_r$, respectively.  Let $V$ denote the $4$-manifold with boundary
obtained from $X$ by removing a regular neighborhood of these $r$
sections union a nonsingular fiber. Then $V$ admits a PALF (positive
allowable Lefschetz fibration over $D^2$)  and hence a Stein
structure such that the induced contact structure
$\xi_{h,\overline{p}}$ on $\partial V=Y_{h,\overline{p}}$ is
compatible with the open book  $\OB_{h,\overline{p}}$ induced by
this PALF, where $\overline{p}=(p_1,p_2, \ldots, p_r)$. In other
words, $V$ is a Stein filling of the contact $3$-manifold
$(Y_{h,\overline{p}}\;,\; \xi_{h,\overline{p}}) $.
\end{proposition}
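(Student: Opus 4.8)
The plan is to exhibit an explicit positive allowable Lefschetz fibration (PALF) on $V$ and then invoke the standard Loi--Piergallini / Akbulut--Ozbagci correspondence between PALFs and Stein structures. The starting point is the given genus $h$ Lefschetz fibration $X \to S^2$ together with the $r$ disjoint sections $S_1, \ldots, S_r$ of squares $-p_1, \ldots, -p_r$. First I would fix a regular fiber $F$ and remove an open neighborhood $\nu(F)$ together with open tubular neighborhoods $\nu(S_1), \ldots, \nu(S_r)$ of the sections, setting $V = X \setminus (\nu(F) \cup \nu(S_1) \cup \cdots \cup \nu(S_r))$. Since each section meets each fiber in exactly one point, removing the $\nu(S_i)$ deletes $r$ disjoint disk-neighborhoods from every fiber, so the generic fiber of the restricted fibration $V \to D^2$ (where $D^2 = S^2 \setminus \nu(\mathrm{pt})$ is the base of $X$ with a disk around the critical value of $F$ removed) becomes a genus $h$ surface with $r$ boundary components. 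The vanishing cycles of $X$ persist as vanishing cycles of $V \to D^2$, and by hypothesis they are homologically nontrivial, hence none of them bounds in the punctured fiber; this is precisely the allowability condition, so $V \to D^2$ is a PALF.

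Second, I would identify the boundary open book. The boundary $\partial V$ inherits the open book whose page is the generic fiber of the PALF---the genus $h$ surface with $r$ boundary components---and whose monodromy is the product of right-handed Dehn twists along the vanishing cycles, \emph{corrected} by the boundary monodromy coming from the framing of the removed sections. The key computation is that deleting the neighborhood $\nu(S_i)$ of a section of square $-p_i$ contributes exactly a right-handed Dehn twist $t_i^{p_i}$ along a curve parallel to the $i$-th boundary component; this is the standard relation between the self-intersection of a section and the boundary twisting of the resulting open book, and it is the step where the signs and the count $p_i$ must be pinned down carefully. Combined with the vanishing-cycle twists, the total monodromy is $t_1^{p_1} t_2^{p_2} \cdots t_r^{p_r}$ times the product of the interior vanishing-cycle twists; modulo the identification of $\partial V$ with $Y_{h,\overline{p}}$, this matches the monodromy defining $\OB_{h,\overline{p}}$.

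Third, by the theorem of Loi--Piergallini (every PALF supports a Stein structure on its total space) one concludes that $V$ carries a Stein structure, and the contact structure induced on $\partial V$ is the one compatible with the boundary open book, namely $\xi_{h,\overline{p}}$. Finally I would check the orientation/identification $\partial V \cong Y_{h,\overline{p}}$: the boundary of the complement of the sections and fiber, with its natural orientation as a Stein-fillable contact boundary, agrees with $Y_{h,\overline{p}}$ as defined via the plumbing (the opposite orientation on $\partial Z_{h,\overline{p}}$), so that $(\partial V, \xi_{h,\overline{p}}) = (Y_{h,\overline{p}}, \xi_{h,\overline{p}})$ and $V$ is the desired Stein filling.

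The main obstacle I expect is the second step: correctly tracking how the self-intersection $-p_i$ of each section translates into exactly $p_i$ boundary-parallel Dehn twists in the monodromy, including getting the handedness right so that the twists are positive (right-handed), which is what allowability and hence the Stein structure ultimately require. The homological nontriviality hypothesis is exactly what guarantees the interior vanishing cycles remain essential in the punctured page, so that allowability is not spoiled when we delete the section neighborhoods; verifying this carefully is the conceptual heart of the argument.
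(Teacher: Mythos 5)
The paper itself does not prove Proposition~\ref{palf}; it is quoted from \cite{ao2}, and your outline is essentially a reconstruction of the argument given there: restrict the fibration to the complement of a fiber and the sections, observe that the bordered fibration is allowable because homologically essential curves stay essential in the punctured fiber, identify the boundary open book, and invoke Loi--Piergallini/Akbulut--Ozbagci to get the Stein structure. So the route is the right one, and steps one, three and four are fine.

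The problem is in your second step, and as written it computes the wrong monodromy. The open book that a PALF induces on the boundary has monodromy equal to the product of the right-handed Dehn twists along the (lifted) vanishing cycles $\tilde c_1,\ldots,\tilde c_N$ --- nothing else. The role of the sections is not to contribute an \emph{additional} factor $t_1^{p_1}\cdots t_r^{p_r}$ on top of that product; rather, the fact that the fibration closes up over $S^2$ with trivial global monodromy and admits disjoint sections of squares $-p_1,\ldots,-p_r$ forces the identity $\tilde c_1\cdots \tilde c_N = t_1^{p_1}\cdots t_r^{p_r}$ in the mapping class group of the genus $h$ surface with $r$ boundary components (the boundary twists are exactly the failure of the relation $c_1\cdots c_N=1$ to lift, measured by the section framings). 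That equality is what identifies the induced open book with $\OB_{h,\overline{p}}$ and hence $\partial V$ with $Y_{h,\overline{p}}$. Your version, ``$t_1^{p_1}\cdots t_r^{p_r}$ times the product of the interior vanishing-cycle twists,'' literally equals $t_1^{2p_1}\cdots t_r^{2p_r}$, which is the open book of a different $3$-manifold and certainly does not ``match the monodromy defining $\OB_{h,\overline{p}}$.'' The correct intuition is present in your sentence about a section of square $-p_i$ corresponding to $t_i^{p_i}$, but you must place that contribution inside the product of vanishing-cycle twists (as the value of that product), not alongside it.
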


Now we are ready to state and prove the main result of this section: \\

\begin{theorem} \label{sim} There exist infinitely many Seifert fibered
singularity links each of which admits infinitely many exotic
(homeomorphic but pairwise non-diffeomorphic) simply connected Stein
fillings of its canonical contact structure.
\end{theorem}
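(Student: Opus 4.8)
The plan is to produce each exotic Stein filling as the complement inside a closed $4$-manifold of a regular neighborhood of a nonsingular fiber and $r$ sections, so that Proposition~\ref{palf} supplies the Stein structure for free; the exotic phenomenon is then imported from an exotic family of \emph{closed} $4$-manifolds, and the non-diffeomorphism of the fillings is forced by the extension result of Proposition~\ref{ext}. This is the same logical skeleton as the $r=1$ case in \cite{aems}, the new ingredient being the presence of several sections and singular fibers.

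First I would fix suitable data $(h,\overline{p})$ and build an infinite family $\{X_n\}_{n\ge 1}$ of closed, simply-connected $4$-manifolds that are pairwise homeomorphic but pairwise non-diffeomorphic, each carrying a genus-$h$ Lefschetz fibration over $S^2$ with homologically nontrivial vanishing cycles together with $r$ disjoint sections $S_1,\dots,S_r$ of squares $-p_1,\dots,-p_r$. The standard route is to begin from one model Lefschetz fibration already equipped with the required disjoint sphere sections of the prescribed negative squares, and then to perform a knot-surgery (or logarithmic transform) supported in a neighborhood of a regular fiber, chosen \emph{disjoint from the sections and from the vanishing-cycle locus}. This leaves the Lefschetz fibration and the $r$ sections intact while producing infinitely many smooth structures detected by their Seiberg--Witten invariants; arranging the surgery to kill $\pi_1$ keeps every $X_n$ simply connected.

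Next I would apply Proposition~\ref{palf} to each $X_n$: removing the interior of a regular neighborhood of $S_1\cup\cdots\cup S_r$ together with a nonsingular fiber yields a $4$-manifold $V_n$ carrying a PALF, hence a Stein structure, filling $(Y_{h,\overline{p}},\xi_{h,\overline{p}})$. By Lemma~\ref{sin} this boundary is a Seifert fibered singularity link, and by Lemma~\ref{can} the induced contact structure is its canonical one, so each $V_n$ is a Stein filling of the canonical contact structure. The removed neighborhood is precisely the plumbing $Z_{h,\overline{p}}$, so $X_n = V_n \cup_{Y_{h,\overline{p}}} Z_{h,\overline{p}}$. Since the $V_n$ are simply connected and share a common intersection form, inherited from the homeomorphic $X_n$ after deleting the fixed piece $Z_{h,\overline{p}}$, Freedman's theorem (in its version for simply-connected $4$-manifolds with boundary) makes them pairwise homeomorphic. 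For the non-diffeomorphism, suppose $V_n$ were diffeomorphic to $V_m$ for some $n\neq m$. The diffeomorphism restricts to an orientation-preserving self-diffeomorphism of the common boundary $Y_{h,\overline{p}}=\partial Z_{h,\overline{p}}$, which by Proposition~\ref{ext} extends over the cap $Z_{h,\overline{p}}$; gluing this extension to the diffeomorphism on the $V$-part yields $X_n\cong X_m$, contradicting our choice. Letting $(h,\overline{p})$ range over infinitely many genuinely distinct tuples then gives infinitely many distinct singularity links, each with infinitely many exotic simply-connected Stein fillings of its canonical contact structure.

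I expect the main obstacle to be the first step: simultaneously arranging, for infinitely many $(h,\overline{p})$, an exotic family of closed $4$-manifolds that (i) admits a genus-$h$ Lefschetz fibration with $r$ \emph{disjoint} sections of the prescribed squares $-p_i$, (ii) is distinguished by Seiberg--Witten invariants while remaining homeomorphic, and (iii) has simply-connected complement $V_n$. Keeping the surgery region disjoint from all sections while still killing $\pi_1$ and carrying out the Seiberg--Witten computation is the delicate bookkeeping; once the family is in hand, the passage to Stein fillings and the exoticness of the fillings are driven entirely by Propositions~\ref{palf} and~\ref{ext}.
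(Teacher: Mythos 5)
Your overall skeleton (build an exotic family of closed Lefschetz fibrations with disjoint sections, remove a regular fiber and the sections to get Stein fillings via Proposition~\ref{palf}, and force non-diffeomorphism of the fillings by capping off with $Z_{h,\overline{p}}$ and Proposition~\ref{ext}) is exactly the paper's, and you correctly identify where the difficulty lies --- but the way you propose to overcome it does not work, and that is precisely where the paper's real content is. You want the knot surgery to be ``supported in a neighborhood of a regular fiber, chosen disjoint from the sections and from the vanishing-cycle locus,'' so that it ``leaves the Lefschetz fibration and the $r$ sections intact.'' If the surgery torus sits inside the neighborhood of the regular fiber that you subsequently delete, the complement $V_n$ is unchanged and you get nothing; if it sits over some other disk in the base, the genus-$h$ fibration over that disk is destroyed and Proposition~\ref{palf} no longer produces a PALF on the complement. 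There is no essential torus in a Lefschetz fibration over $S^2$ disjoint from all fibers. The paper's resolution is different: in $X(g,2)$, the twofold fiber sum of the hyperelliptic genus-$g$ fibration on $\CP\#(4g+5)\CPb$, it uses Tanaka's $4g+4$ disjoint $(-1)$-sphere sections together with the horizontal sphere fibration to produce a square-zero torus $T$ that meets \emph{every} fiber in two points and misses the sections. Fintushel--Stern knot surgery on $T$ with a genus-$k$ fibered knot then converts the genus-$g$ fibration into a genus-$(g+2k)$ Lefschetz fibration (each fiber gains two copies of the Seifert surface of $K$), still carrying the $(-2)$-sphere sections; the boundary contact manifold is therefore $(Y_{g+2k,\overline{r}},\xi_{g+2k,\overline{r}})$, which is \emph{fixed} as $K$ ranges over Kanenobu's infinite family of genus-$k$ fibered knots with distinct Alexander polynomials, and those knots supply the infinitely many smooth structures. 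Your plan has no mechanism for producing the exotic closed family while retaining the fibration data that Proposition~\ref{palf} needs.

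A secondary inaccuracy: you assert that simple connectivity plus a common intersection form makes the fillings pairwise homeomorphic ``by Freedman's theorem in its version for manifolds with boundary.'' No such uniqueness statement is available here. The paper instead verifies that the intersection forms are isomorphic (their determinants vanish because $H_1$ of the boundary is infinite) and then invokes Boyer's finiteness theorem \cite[Corollary 0.4]{b}, concluding only that the infinitely many fillings occupy finitely many homeomorphism types, so that infinitely many of them are homeomorphic to one another. That weaker conclusion suffices for the theorem, but the step genuinely needs Boyer rather than Freedman.
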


\begin{proof} We will give a proof of this result in the following four
parts: \\

\textbf{\underline{Part 1}.} \emph{A genus $g$ Lefschetz fibration
on $\CP\#(4g+5)\CPb$}: Let $\Sigma_g$ be a closed orientable surface
of genus $g \geq 1$. Let $\gamma_1$, $\gamma_2$, \ldots,
$\gamma_{2g+1}$ denote the collection of simple closed curves on
$\Sigma_g$ depicted in Figure~\ref{hyp}, and $c_{i}$ denote the
right handed Dehn twists along the curve $\gamma_i$. Let $X(g,1)$
 denote $\CP\#(4g+5)\CPb$. The next result is well-known (cf. \cite[Exercises 7.3.8(b) and
8.4.2(a)]{gos}).

\begin{lemma} There is a hyperelliptic genus $g$ Lefschetz fibration $f_1:  X(g,1) \to
S^2$ with global monodromy $ (c_1c_2 \cdots
c_{2g-1}c_{2g}{c^2_{2g+1}}c_{2g}c_{2g-1} \cdots c_2c_1)^2 = 1.$
\end{lemma}

\begin{figure}[ht]
  \relabelbox \small {
  \centerline{\epsfbox{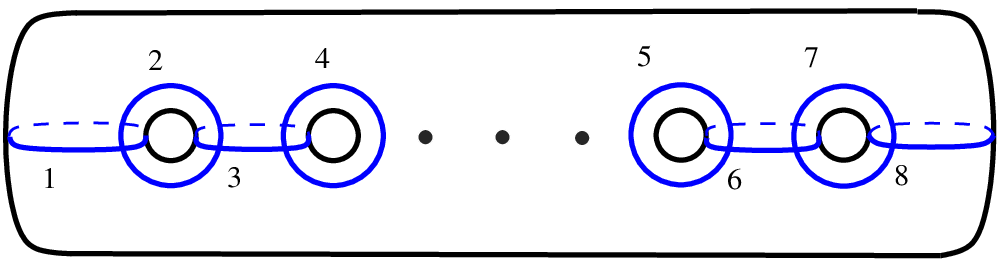}}}

\relabel{1}{$\g_1$}

\relabel{2}{$\g_2$}

\relabel{3}{$\g_3$}

\relabel{4}{$\g_4$}

\relabel{5}{$\g_{2g-2}$}

\relabel{6}{$\g_{2g-1}$}

\relabel{7}{$\g_{2g}$}

\relabel{8}{$\g_{2g+1}$}

  \endrelabelbox
        \caption{Vanishing cycles of the hyperelliptic  genus $g$ Lefschetz fibration
        $f_1: X(g,1)= \CP\#(4g+5)\CPb \to S^2$}
        \label{hyp}
\end{figure}

The $4$-manifold  $X(g,1)$ is diffeomorphic to the desingularization
of the double branched cover of $S^2 \times S^2$ with branch locus
given as two copies of $S^2 \times pt$ and $2g+2$ copies of $pt
\times S^2$. Based on this description, it is easy to see that
$X(g,1)$ admits a ``vertical" genus $g$ fibration with two singular
fibers and a ``horizontal" fibration with $S^2$ as a regular fiber.
Moreover the vertical fibration can be locally perturbed so that it
becomes isomorphic to the Lefschetz fibration $f_1:  X(g,1) \to S^2$
as explained in \cite[Exercise 8.4.2(c)]{gos}).

\begin{lemma} \cite[Corollary 4.6]{t}\label{tana} For any $g \geq 1$, $f_1 : X(g,1) \to S^2$
admits at least $4g + 4$ disjoint $(-1)$-sphere sections.
\end{lemma}

We claim that the exceptional sphere $s_i$ of the $i$-th blow up is
a section of the Lefschetz fibration $f_1 : X(g,1) \to S^2$ for $2
\leq i \leq 4g+5$. Let $h$ denote the canonical generator of $H_2
(\CP, \mathbb{Z})$ and let $[F] \in H_2 (X(g,1), \mathbb{Z})$ denote
the homology class of the fiber $F$ of the Lefschetz fibration $f_1
: X(g,1) \to S^2$. Then, by \cite[Lemma 3.3]{fps}, we have
$[F]=(g+2)h - ge_1 - e_2 -\cdots - e_{4g+5}$, where $e_i=[s_i]$
denotes the homology class of the sphere $s_i$. Since, $[F] \cdot
e_i=1$ (for $2 \leq i \leq 4g+5$) and the fiber $F$ and sphere $e_i$
can be chosen to be pseudo-holomorphic (so that they only intersect
positively), we conclude that the exceptional sphere $s_i$
intersects each genus $g$ fiber of the Lefschetz fibration $f_1 :
X(g,1) \to S^2$ geometrically ones---which proves our claim.

Note that the fiber of the horizontal fibration above is a square
zero sphere in $X(g,1)$ given by the homology class $h - e_{1}$,
which intersects every fiber of $f_1: X(g,1) \to S^2$ twice.

\begin{definition} We denote the $n$-fold fiber sum of the genus $g$ Lefschetz
fibration $f_1: X(g,1) \to S^2$ by $f_n: X(g,n) \to S^2$.
\end{definition}

By sewing together the disjoint $(-1)$-sphere sections of $f_1:
X(g,1) \to S^2$ we obtain $4g+4$ disjoint $(-n)$-sphere sections of
$f_n: X(g,n) \to S^2$. In order to prove Theorem~\ref{sim}, we just
focus on $f_2: X(g,2) \to S^2$ for $g \geq 2$. When we fiber sum two
copies of $f_1: X(g,1) \to S^2$ to obtain $f_2: X(g,2) \to S^2$, we
can also glue together
 square-zero sphere fibers of the horizontal fibrations on each
summand to construct an embedded essential torus $T$ of square zero
in $X(g,2)$. The outcome of Part 1 of our proof is that

\begin{lemma}\label{t} There is  an embedded torus $T$ in $X(g,2)$ with two key properties:
(i) $T$ intersects every fiber of the genus $g$ Lefschetz fibration
$f_2: X(g,2) \to S^2$ at two points and (ii)  $T$ has no
intersection with the $4g+4$ disjoint $(-2)$-sphere sections of this
fibration.
\end{lemma}

\textbf{\underline{Part 2}.} \emph{Fintushel-Stern knot surgery}:
Let $X(g,2)_{K}$ denote the $4$-manifold obtained from $X(g,2)$ by
performing a Fintushel-Stern knot surgery on the torus $T$ (see
Lemma~\ref{t}) in $X(g,2)$ using a  knot $K \subset S^3$ (cf.
\cite{fs}). More precisely, $$X(g,2)_{K} = (X(g,2) \setminus (T
\times D^2)) \cup (S^1 \times (S^3 \setminus N(K)),$$ where  we
identify the boundary of a disk normal to $T$ with a longitude of a
tubular neighborhood $N(K)$  of  $K$ in $S^3$. Next we observe that,

\begin{lemma} \label{lft} For any genus $k$ fibered knot $K$, the surgered $4$-manifold
$X(g,2)_{K}$ admits a genus $(g+2k)$-Lefschetz fibration with $4g+4$
disjoint $(-2)$-sphere sections.
\end{lemma}

\begin{proof} The torus $T \subset X(g,2)$ on
which we perform knot surgery intersects  every fiber of  $f_2:
X(g,2) \to S^2$ twice and  a  fiber of the Lefschetz fibration
$X(g,2)_K \to S^2$ is obtained by gluing one copy of the Seifert
surface of the fibered knot $K$ to each puncture of the twice
punctured fiber of $f_2: X(g,2) \to S^2$ (cf. \cite{fs2}).

Recall that $e_2, e_3, \ldots, e_{4g+5}$ denote the homology classes
of the disjoint $(-1)$-sphere sections of $f_1: X(g,1) \to S^2$.
When we fiber sum two copies of $f_1: X(g,1) \to S^2$, we can glue
corresponding $(-1)$-sphere sections in the two summands to obtain
$4g+4$ disjoint $(-2)$-sphere sections $S_2, S_3, \ldots, S_{4g+5}$
of $f_2: X(g,2) \to S^2$. Note that these $(-2)$-sphere sections
will remain as sections of the Lefschetz fibration $X(g,2)_{K} \to
S^2$, since they are disjoint from the surgery torus $T$.
\end{proof}

\textbf{\underline{Part 3}.} \emph{Construction of the
simply-connected Stein fillings}:

\begin{definition}  For any $ 1\leq r \leq 4g+3$ and for any genus $k$ fibered knot $K$ in $S^3$, the $4$-manifold-with-boundary  $V(g,r)_{K} \subset X(g,2)_{K} $ is obtained by removing  a regular
neighborhood of $r$ disjoint sections $S_{2}, S_{3}, \ldots,
S_{r+1}$ union a nonsingular genus $g + 2k$ fiber of the Lefschetz
fibration  $X(g,2)_{K} \to S^2$ given in Lemma~\ref{lft}.
\end{definition}

We would like to emphasize that we do not remove the section
$S_{4g+5}$.

\begin{lemma}
The $4$-manifold  $V(g,r)_{K}$ is simply-connected.
\end{lemma}

\begin{proof}

Observe that, by the Seifert-Van Kampen's theorem, the fundamental
group of $V(g,r)_{K}$ is generated by the homotopy classes of loops
based at some point $q \in S_{4g+5}$ that are conjugate to loops
$\mu_{2}, \mu_{3}, \ldots, \mu_{r+1}$ and $\eta$ normal to $S_2,
S_3, \ldots, S_{r+1}$, and to the regular fiber we remove,
respectively. Since all the loops $\mu_{2}, \mu_{3}, \ldots,
\mu_{r+1}$, and $\eta$ can be deformed to a point using the spheres
represented by the homology classes $e_{4g+5} - e_{2}, e_{4g+5} -
e_{3}, \ldots, e_{4g+5} - e_{r+1}$ and the section $S_{4g+5}$,
respectively, we conclude that $V(g,r)_{K}$ is simply-connected.
\end{proof}

For any positive integer $r$, let $\overline r$ denote the $r$-tuple
$(2,2,\ldots,2)$ for the rest of this section. Then
Proposition~\ref{palf} coupled with Lemma~\ref{can} imply that

\begin{lemma}
The $4$-manifold $V(g,r)_{K}$ is a Stein filling of
$(Y_{g+2k,\overline{r}}\;,\;\xi_{g+2k,\overline{r}})$, where
$\xi_{g+2k,\overline{r}}$ is the canonical contact structure on the
Seifert-fibered singularity link $Y_{g+2k,\overline{r}}$.
\end{lemma}

\textbf{\underline{Part 4}.} \emph{An infinite family of exotic
Stein fillings}: For $k \geq 2$, let $\mathcal{F}_k=\{K_{k,i} : i
\in \mathbb{N} \}$ denote  an infinite family of genus $k$ fibered
knots in $S^3$ with pairwise distinct Alexander polynomials, which
exists by \cite{kan}. Then the infinite family $\{ X(g,2)_{K_{k,i}}
: K_{k,i} \in \mathcal{F}_k\}$ consists of smooth $4$-manifolds
homeomorphic to $X(g,2)$ which are pairwise non-diffeomorphic by a
theorem of Fintushel and Stern \cite{fs}. Now we claim that for
fixed $g \geq 2$, $k \geq 2$, and $1 \leq r \leq 4g+3$, the infinite
set
$$\mathcal{S}_{g,k,r} = \{ V(g,r)_{K_{k,i}} :  K_{k,i} \in
\mathcal{F}_k\}$$ indexed by $i \in \mathbb{N}$, includes infinitely
many homeomorphic but pairwise non-diffeomorphic simply-connected
Stein fillings of the Seifert fibered singularity link
$(Y_{g+2k,\overline{r}}\;,\;\xi_{g+2k,\overline{r}})$.

In order to prove that these Stein fillings are pairwise
non-diffeomorphic we just appeal to Proposition~\ref{ext}, by
observing that what we delete from $X(g,2)_{K_{k,i}}$ to obtain
$V(g,r)_{K_{k,i}}$ is indeed diffeomorphic to $Z_{g+2k,
\overline{r}}$.

Next we prove that infinitely many of  the Stein fillings in
$\mathcal{S}_{g,k,r}$ are homeomorphic.  We first observe that all
of these Stein fillings have the same Euler characteristic by
elementary facts and the same signature by Novikov additivity. It
follows that the rank of the second homology group of the fillings
is fixed as well because the fillings are simply-connected.
Moreover, since the boundary of any Stein filling in
$\mathcal{S}_{g,k,r}$ is diffeomorphic to $Y_{g+2k,\overline{r}}$
and $H_1(Y_{g+2k,\overline{r}}\; ; \mathbb{Z})$ is infinite, we
conclude that the determinant of the intersection form of any
filling in $\mathcal{S}_{g,k,r}$ is zero. It follows that
intersection forms of all the Stein fillings  in
$\mathcal{S}_{g,k,r}$ are isomorphic (see \cite[Corollary 5.3.12 and
Exercise 5.3.13(f)]{gos}). Furthermore, a fixed symmetric bilinear
form is realized as an intersection form by only finitely many
homeomorphism types of simply-connected compact oriented
$4$-manifolds with a given boundary \cite[Corollary 0.4]{b}.
Therefore the infinitely many Stein fillings in
$\mathcal{S}_{g,k,r}$ belong to finitely many homeomorphism
types---which finishes the proof of Theorem~\ref{sim}.\end{proof}

\section{Exotic Stein fillings with non-trivial fundamental groups}\label{abelianstein}

Our aim in this section is to explore the existence of non-simply
connected exotic Stein fillings of some singularity links. Let $n$
be a positive integer.  In this paper, we only study the case when
the fundamental group of the Stein fillings is $\mathbb{Z}\oplus
\mathbb{Z}_n$.

As an essential ingredient we use the family of non-holomorphic
genus $g$ Lefschetz fibrations with fundamental group $\mathbb{Z}
\oplus \mathbb{Z}_{n}$ constructed in \cite{os1} for $g =2$ and
generalized to the case $g \geq 3$ in \cite{k}. For the purposes of
this article we focus on the case where $g \geq 3$ is odd and
provide the necessary background for the convenience of the reader.

\begin{definition}
Let $W(m):= \Sigma_{m} \times \mathbb{S}^2\#8\CPb$, where $\Sigma_m$
denotes a closed oriented genus $m$ surface.

\end{definition}

Note that $W(m)$ is the total space of a genus $g = 2m+1$ Lefschetz
fibration over $\mathbb{S}^2$, which was proved in \cite[Remark
5.2]{k} generalizing a classical result for $g=2$ due to Y.
Matsumoto. The branched-cover description of this Lefschetz
fibration can be given as follows \cite{fs2}: Take a double branched
cover of $\Sigma_{m} \times \mathbb{S}^2$ along the union of four
disjoint copies of ${pt}\times \mathbb{S}^{2}$ and two disjoint
copies of $\Sigma_{m} \times {pt}$ as shown in Figure~\ref{bloc}.

\begin{figure}[ht]
  \relabelbox \small {
  \centerline{\epsfbox{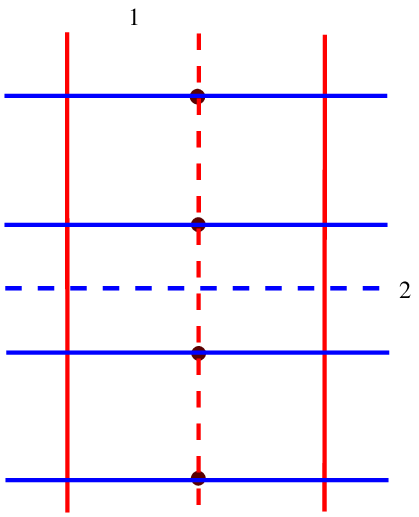}}}

\relabel{1}{$\Sigma_m \times pt$}

\relabel{2}{$pt \times S^2$}

  \endrelabelbox
        \caption{The branch set in $\Sigma_{m} \times
\mathbb{S}^2$}
        \label{bloc}
\end{figure}

The resulting branched cover has eight singular points,
corresponding to the intersection points of the horizontal spheres
and the vertical genus $m$ surfaces in the branch set. By
desingularizing this singular manifold one obtains $W(m)$. Observe
that a generic fiber of the vertical fibration is the double cover
of $\Sigma_{m}$ branched over four points. Thus, a generic fiber is
a genus $g=2m+1$ surface and each of the two singular fibers of the
vertical fibration can be perturbed into $2m+6$ Lefschetz type
singular fibers.

\begin{proposition}\cite{k} The $4$-manifold $W(m)$ admits a genus $g$ Lefschetz fibration over
$S^2$ with $2g+10$ singular fibers such that the monodromy of this
fibration is given by the relation
$$(b_0 b_1 b_2 \ldots b_g a^2b^2)^2=1$$ where
$b_i$ denotes a right-handed Dehn twists along $\b_i$, for
$i=0,1,\ldots,g$ and $a$ and $b$ denote right-handed Dehn twists
along $\a$ and $\b$ respectively (see Figure~\ref{rel}).
\end{proposition}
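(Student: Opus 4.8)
The plan is to build the fibration directly from the double branched cover description recorded above, following the strategy of \cite{k} that generalizes Matsumoto's genus-$2$ construction, and then to read off its vanishing cycles. First I would produce the map. The projection of $\Sigma_m \times \mathbb{S}^2$ onto its second factor is a trivial bundle with fiber $\Sigma_m \times pt$, and over a generic point this fiber meets the branch set in exactly the four points cut out by the horizontal spheres $pt \times \mathbb{S}^2$. Hence the restriction of the double cover to such a fiber is the double cover of $\Sigma_m$ branched over four points, which has genus $g = 2m + 1$ by Riemann--Hurwitz. Composing the desingularization $W(m) \to \Sigma_m \times \mathbb{S}^2$ with this projection therefore yields a map $W(m) \to \mathbb{S}^2$ whose generic fiber has genus $g$.

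Next I would locate and resolve the degenerate fibers. The only non-generic fibers lie over the two points $q_1, q_2 \in \mathbb{S}^2$ carrying the vertical branch surfaces, since over $q_j$ the entire fiber $\Sigma_m \times q_j$ belongs to the branch locus. Following \cite{fs2}, I would perturb the branch configuration near each $q_j$ to a nearby branch curve whose preimage replaces the degenerate fiber by a string of nodal fibers. The node count is pinned down by the Euler characteristic: since $\chi(W(m)) = \chi(\Sigma_m \times \mathbb{S}^2) + 8 = (4 - 4m) + 8 = 14 - 2g$, while a genus $g$ Lefschetz fibration over $\mathbb{S}^2$ with $n$ nodal fibers satisfies $\chi = 2(2 - 2g) + n$, we obtain $n = (14 - 2g) - 2(2 - 2g) = 2g + 10$. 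Each of the two symmetric degenerate fibers then contributes $g + 5 = 2m + 6$ nodal fibers, in agreement with the perturbation.

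The substantive step is the identification of the vanishing cycles. Each vanishing cycle is the lift, through the hyperelliptic involution of the double cover, of an embedded arc on $\Sigma_m$ joining two branch points that are collided by the perturbation. Tracking these arcs near a single degenerate fiber produces, on the genus $g$ fiber, the chain $\beta_0, \beta_1, \ldots, \beta_g$ associated with the branch points strung along $\Sigma_m$, together with the two curves $\alpha, \beta$ (each occurring twice) carried by the genus-$m$ handles of the vertical branch surface, exactly the curves drawn in Figure~\ref{rel}; this accounts for the $(g+1) + 4 = 2m + 6$ twists in one block $b_0 b_1 \cdots b_g\, a^2 b^2$. The obvious symmetry of the branch set interchanging $q_1$ and $q_2$ forces the two blocks to coincide, so the global monodromy is the square $(b_0 b_1 \cdots b_g\, a^2 b^2)^2$. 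Because the base is the closed sphere $\mathbb{S}^2$ and the fibers are closed, the product of all vanishing cycles is automatically trivial in $\mathrm{Mod}(\Sigma_g)$, which gives the relation $(b_0 b_1 \cdots b_g\, a^2 b^2)^2 = 1$ for free; if desired this can be corroborated directly from the chain and hyperelliptic relations.

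The hard part will be the vanishing-cycle bookkeeping of the previous paragraph: reading off precisely which simple closed curves appear, in which cyclic order, and with which multiplicity, from the local resolution of each degenerate fiber. The genus of the fiber, the number of singular fibers, and the diffeomorphism type of the total space are all comparatively routine once the branched cover picture and the Euler-characteristic count are in hand; it is the passage from the geometry of the colliding branch points to the explicit factorization that carries the genuine content, and here invoking Matsumoto's genus-$2$ fibration as the local building block, together with the computation of \cite{k}, becomes essential.
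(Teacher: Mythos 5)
Your proposal follows the same route the paper takes: the proposition is not proved in the paper but cited to \cite{k}, and the branched-cover sketch given in the surrounding text (the vertical fibration of the double cover of $\Sigma_m\times \mathbb{S}^2$ branched over four horizontal spheres and two vertical genus-$m$ surfaces, fiber genus $g=2m+1$ by Riemann--Hurwitz, and each of the two degenerate fibers perturbing into $2m+6$ Lefschetz-type singular fibers) is exactly your outline, with your Euler-characteristic count confirming the total of $2g+10$. You also rightly identify that the only substantive content --- matching the vanishing cycles to the explicit curves of Figure~\ref{rel} --- is the part that must be imported from Matsumoto's genus-$2$ computation and its generalization in \cite{k}.
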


\begin{figure}[ht]
  \relabelbox \small {
  \centerline{\epsfbox{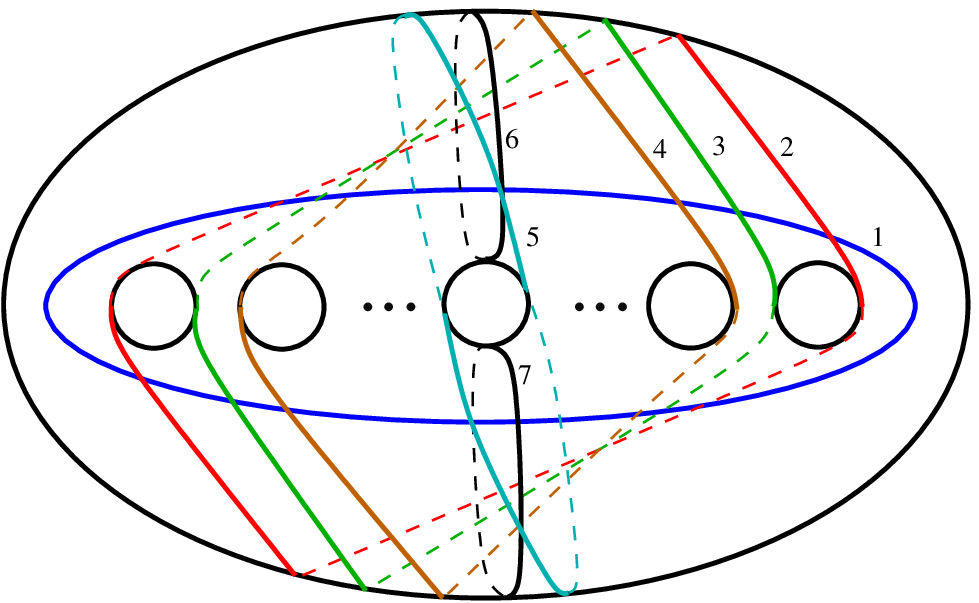}}}

\relabel{1}{$\beta_0$}

\relabel{2}{$\beta_1$}

\relabel{3}{$\beta_2$}

\relabel{4}{$\beta_3$}

\relabel{5}{$\beta_g$}

\relabel{6}{$\alpha$}

\relabel{7}{$\beta$}

\endrelabelbox
        \caption{Vanishing cycles of the genus $g=2m+1$ Lefschetz fibration
        $W(m) =\Sigma_{m}\times \mathbb{S}^2\#8\CPb  \to S^2$ corresponding to
$(b_0 b_1 b_2 \ldots b_g a^2b^2)^2=1.$}
        \label{rel}
\end{figure}

Also, a generic fiber of the horizontal fibration is the double
cover of $\mathbb{S}^2$ branched over two points. This gives a
sphere fibration on $W(m)$. We are now ready to state the main
result of this section.

\begin{theorem} \label{sima} There exists an infinite family of
Seifert fibered singularity links such that for each positive
integer $n$, each member of this family equipped with its canonical
contact structure admits infinitely many exotic (homeomorphic but
pairwise non-diffeomorphic) Stein fillings whose fundamental group
is $\mathbb{Z} \oplus \mathbb{Z}_{n}$.
\end{theorem}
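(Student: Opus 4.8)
The plan is to mirror the four-part architecture of Theorem~\ref{sim}, replacing the simply-connected building block $X(g,2)$ with the non-holomorphic Lefschetz fibration $W(m)$, whose total space has fundamental group $\mathbb{Z}\oplus\mathbb{Z}_n$ after a suitable knot surgery. First I would record the disjoint sphere sections of $W(m) \to S^2$. The branched-cover description exhibits $W(m)=\Sigma_m\times\mathbb{S}^2\#8\CPb$, so the eight exceptional spheres $e_1,\ldots,e_8$ are natural candidates for sections; as in Part~1 of Theorem~\ref{sim}, I would verify via the homological computation (the fiber class paired against $e_i$ equals one, together with positivity of pseudoholomorphic intersections) that enough of these exceptional spheres meet every genus $g$ fiber exactly once and are mutually disjoint. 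The horizontal sphere fibration supplies a square-zero sphere meeting each fiber twice, and fiber-summing two copies of $W(m)$ (to build $W(m,2)\to S^2$) lets me glue the corresponding section spheres into disjoint $(-2)$-sphere sections and glue the horizontal fibers into an embedded square-zero torus $T$ disjoint from those sections, exactly as in Lemma~\ref{t}.

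Next I would apply Fintushel--Stern knot surgery along $T$ using a genus $k$ fibered knot $K$, producing $W(m,2)_K$. By the same reasoning as Lemma~\ref{lft}, this surgery raises the fiber genus to $g+2k$ while preserving a Lefschetz fibration structure, and since $T$ is disjoint from the $(-2)$-sphere sections those sections survive into $W(m,2)_K\to S^2$. Removing a regular neighborhood of $r$ of these disjoint $(-2)$-sections together with a regular fiber yields a $4$-manifold $V$ which, by Proposition~\ref{palf} and Lemma~\ref{can}, is a Stein filling of the canonical contact structure $\xi_{g+2k,\overline r}$ on the Seifert fibered singularity link $Y_{g+2k,\overline r}$, where $\overline r=(2,\ldots,2)$. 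The new ingredient is the fundamental-group computation: a van~Kampen argument, as in the simply-connected case, shows $\pi_1(V)$ is generated by the meridians of the removed sections and fiber, but now these meridians no longer all die---the surviving relations coming from the $W(m)$ geometry and the knot surgery should pin the group down to $\mathbb{Z}\oplus\mathbb{Z}_n$, provided $n$ is chosen by selecting the appropriate base manifold $W(m)$ (varying $m$, equivalently $g=2m+1$) from the family of \cite{os1,k} realizing each such group.

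For the exotic phenomenon I would run the standard two-pronged argument. To distinguish diffeomorphism types I would use an infinite family $\mathcal F_k=\{K_{k,i}\}$ of genus $k$ fibered knots with distinct Alexander polynomials (from \cite{kan}); Fintushel--Stern \cite{fs} then guarantees the closed manifolds $W(m,2)_{K_{k,i}}$ are pairwise non-diffeomorphic, and Proposition~\ref{ext} lets me conclude that the Stein fillings $V_{K_{k,i}}$ are pairwise non-diffeomorphic, because what is deleted to form each filling is diffeomorphic to the fixed plumbing $Z_{g+2k,\overline r}$, so a diffeomorphism of fillings would extend to one of closed manifolds. To see that infinitely many of these fillings are homeomorphic, I would compute that they share Euler characteristic and signature, hence the same $b_2$ given fixed $\pi_1$; with $H_1(Y_{g+2k,\overline r};\mathbb{Z})$ infinite the intersection forms are isomorphic, and Boyer's finiteness result \cite{b} then forces infinitely many of them into a single homeomorphism type.

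The main obstacle I anticipate is the fundamental-group computation in the non-simply-connected setting. In Theorem~\ref{sim} every meridian could be capped off by a sphere, trivializing $\pi_1$; here I must instead track precisely which relations survive, ensuring that removing the $r$ sections and the fiber leaves exactly the group $\mathbb{Z}\oplus\mathbb{Z}_n$ rather than something larger or smaller, and that this group is unchanged by the knot surgery (the surgered piece $S^1\times(S^3\setminus N(K))$ contributes its own $\pi_1$, which must be absorbed correctly via the meridian identification). Confirming that $\pi_1(V)$ is independent of the knot $K_{k,i}$ within the family---so that homeomorphism, which requires matching fundamental groups, is even possible---is the delicate point, and I would handle it by a careful van~Kampen computation anchored at the retained section $S_{4g+5}$, exactly parallel to but more bookkeeping-intensive than the simply-connected case.
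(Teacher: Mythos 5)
There are two genuine gaps in your proposal, both at points where the non-simply-connected setting forces the paper to depart from the Theorem~\ref{sim} template rather than merely repeat it.

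First, the mechanism producing $\pi_1=\mathbb{Z}\oplus\mathbb{Z}_n$ is missing. You form the \emph{untwisted} fiber sum $W(m,2)$ of two copies of $W(m)\to S^2$ and hope that ``relations coming from the $W(m)$ geometry'' together with a choice of $m$ will yield $\mathbb{Z}\oplus\mathbb{Z}_n$. Varying $m$ only changes the fiber genus $g=2m+1$ (hence which singularity link $Y_{2(m+k)+1,(2)}$ you land on); it cannot produce $n$-torsion, and the untwisted fiber sum $W_0(m)$ does not have the desired fundamental group. The paper instead takes a \emph{twisted} fiber sum $W_n(m)$, gluing the two fiber neighborhoods by the $n$-th power of a right-handed Dehn twist along a homologically nontrivial curve on the fiber; it is exactly this twist (as computed in \cite{ako}, following \cite{os1,k}) that forces $\pi_1(W_n(m))=\mathbb{Z}\oplus\mathbb{Z}_n$. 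Without it your construction has no input of $n$ at all.

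Second, your homeomorphism argument fails. You invoke equality of $\chi$, $\sigma$, and intersection forms together with Boyer's finiteness theorem, but the result used in Part~4 of Theorem~\ref{sim} (\cite[Corollary 0.4]{b}) applies to \emph{simply-connected} $4$-manifolds with given boundary; here the fillings have $\pi_1=\mathbb{Z}\oplus\mathbb{Z}_n$, and no such finiteness statement is available. The paper replaces this step with an explicit homeomorphism (Proposition~\ref{hom}): it shows $W_n(m)$ contains a Gompf nucleus $C_2$ of $E(2)$ containing the surgery torus, splits $W_n(m)=C_2\cup_{\Sigma(2,3,11)}V(n,m)$ along a homology sphere, applies the \emph{other} result of Boyer (\cite[Corollary 0.9]{b}) to obtain a homeomorphism $(C_2)_K\to C_2$ restricting to the identity on the boundary, and then argues carefully that the resulting homeomorphism of the closed manifolds descends to the Stein fillings after deleting the fiber and section neighborhoods. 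This localization of the knot surgery inside a nucleus, and the descent argument, are the essential new ideas your outline does not supply. (Minor further discrepancies: the paper uses only the two disjoint $(-1)$-sphere sections of $W(m)\to S^2$ guaranteed by \cite{k2} and removes a single $(-2)$-section, so the target link is $Y_{2(m+k)+1,(2)}$; your claim of many disjoint exceptional-sphere sections of this non-hyperelliptic-type fibration is unsubstantiated and unnecessary.)
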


\begin{proof}

For $g = 2m+1 \geq 3$, let $W_n(m)$ denote the total space of the
Lefschetz fibration over $S^2$ obtained by a \emph{twisted} fiber
sum of two copies of the Lefschetz fibration $W(m) \to S^2$ along
the regular genus $g$ fiber (cf.  \cite{os1, k}).  Notice that
twisted fiber sum refers to the fiber sum where a regular
neighborhood of a fixed regular fiber of $W(m) \to S^2$ is
identified with a regular neighborhood of a fixed regular fiber of
another copy of $W(m) \to S^2$ by a non-trivial diffeomorphism of
the fiber. As shown in \cite{ako}, there is a diffeomorphism
involving an $n$-fold power of a right-handed Dehn twist along a
homologically nontrivial curve on the fiber such that
$\pi_1(W_n(m))= \mathbb{Z} \oplus \mathbb{Z}_n$. Since in $W(m)$ the
generic fiber of the vertical fibration intersects the generic fiber
of the sphere fibration in two points, after the fiber sum we have
an embedded homologically essential torus $T$ of self-intersection
zero in $W_n(m)$. Notice that a regular fiber of the genus $g$
fibration on $W_n(m)$ intersects $T$ at two points. It was shown in
\cite{k2} that the Lefschetz fibration on $W(m)$ admits at least two
disjoint $(-1)$-sphere sections, which implies that the Lefschetz
fibration on $W_n(m)$ admits at least two disjoint $(-2)$-sphere
sections. The torus $T$ above can be chosen to be disjoint from
these $(-2)$-sphere sections.

Let $W_n(m)_K$ denote the result of the Fintushel-Stern knot surgery
along the torus $T$ by a knot $K$ in $S^3$.  We observe that  by
Seifert-Van Kampen's theorem, $\pi_1 (W_n(m)_{K})= \mathbb{Z}\oplus
\mathbb{Z}_{n}$, since all the loops on $T$ are nullhomotopic in
$W_n(m)$ and  $W_n(m)_K$.

\begin{proposition} \label{hom} For any pair of positive integers $(m, n)$ and for any knot $K$ in $S^3$, the $4$-manifold
$W_n(m)_K$ is homeomorphic to $W_n(m)$.
 \end{proposition}

\begin{proof} The branched-cover description of the $4$-manifold
$W(m)$, whose branch locus in $\Sigma_{m} \times \mathbb{S}^2$ is
depicted in Figure~\ref{bloc}, shows that $W(m)$ admits a sphere
fibration, and the generic fiber of the genus $g$ Lefschetz
fibration on $W(m)$ intersects the generic fiber of the sphere
fibration in two points. Hence the untwisted fiber sum of two copies
of the Lefschetz fibration $W(m) \to S^2$ along the regular genus
$g$ fiber, which we denote by $W_0(m)$, admits an elliptic
fibration. Alternatively, $W_0(m)$ can be viewed as the fiber sum of
$\Sigma_{m} \times T^{2}$ and the elliptic surface $E(2)$ where we
identify $pt \times T^{2} \subset \Sigma_{m} \times T^{2}$ with an
elliptic fiber of $E(2)$. The elliptic fibration structure on
$W_0(m)$ over the genus $g$ surface is induced from the elliptic
fibrations of $E(2)$ and $\Sigma_{m} \times T^{2}$ via this fiber
sum. Moreover, the manifold $W_n(m)$ can be obtained from $W_0(m)$
by a single Luttinger surgery along a Lagrangian torus (for details,
see \cite{ako}), disjoint from an elliptic fiber. Therefore,
$W_n(m)$ contains a Gompf nucleus $C_{2}$ of $E(2)$: Use a cusp
fiber of the above mentioned elliptic fibration, and a $(-2)$-sphere
section obtained by sewing together $(-1)$-sphere sections of the
sphere fibration on $W(m)$. Furthermore, the torus along which we
perform Fintushel-Stern knot surgery can be assumed to lie in this
cusp neighborhood.

Next we  decompose $W_n(m)$ into $C_{2} \cup_{\Sigma(2,3,11)}
V(n,m)$ along the homology $3$-sphere $\Sigma(2,3,11)$,  where
$V(n,m)$ denotes the complement of $C_{2}$. Then, for any knot $K$
in $S^3$, we have a corresponding decomposition of $W_n(m)_{K}$ into
$(C_{2})_{K} \cup_{\Sigma(2,3,11)} V(n,m)$, where $(C_{2})_{K}$ is
an exotic copy of $C_{2}$ (cf. \cite{go}). Since $\partial C_2$ is a
homology $3$-sphere, by \cite[Corollary 0.9]{b}, there exits a
homeomorphism from $(C_{2})_{K}$ to  $C_{2}$ which restricts to the
identity map on the boundary. As a consequence, we have constructed
a homeomorphism between the $4$-manifolds $W_n(m)_{K}$ and $W_n(m)$
which extends the identity map on $V(n,m)$. \end{proof}

Suppose that $K$ is a fibered knot in $S^3$  of genus $k$. Then, a
simple argument similar to the one used in the proof of
Lemma~\ref{lft} shows that $W_n(m)_{K}$ admits a genus $g + 2k = 2(m
+ k) +1$ Lefschetz fibration over $S^2$ with two disjoint
$(-2)$-sphere sections. Recall that, in Part $4$ of the proof of
Theorem~\ref{sim}, for any $k \geq 2$, we denoted an infinite family
of genus $k$ fibered knots  in $S^3$  with pairwise distinct
Alexander polynomials by $\mathcal{F}_k=\{K_{k,i} : i \in \mathbb{N}
\}$.

Now let us fix a triple of positive integers $(m,n,k)$, where $k
\geq 2$. By removing a tubular neighborhood of a regular fiber and
only one of the two $(-2)$-sphere sections of the genus $2(m+k)+1$
Lefschetz fibration on $W_n(m)_{K_{k,i}}$, we obtain an infinite
family (indexed by $i \in \mathbb{N}$) of Stein fillings of the
Seifert fibered singularity link $Y_{2(m+k)+1, (2)}$ with its
canonical contact structure, such that each filling has $\pi_1 =
\mathbb{Z} \oplus \mathbb{Z}_{n}$. We claim that these Stein
fillings are exotic copies of each other, i.e., they are all
homeomorphic but pairwise non-diffeomorphic. The fact that these
fillings are pairwise non-diffeomorphic follows from
Proposition~\ref{ext} as in Part $4$ in the proof of
Theorem~\ref{sim}.

Finally, for fixed $(m,n,k)$, we show that the Stein fillings
described above with $\pi_1 = \mathbb{Z} \oplus \mathbb{Z}_{n}$
belong to the same homeomorphism type. We proved in
Proposition~\ref{hom} that for fixed positive integers $m$ and $n$,
all the smooth $4$-manifolds in the infinite family
$\{W_n(m)_{K_{k,i}} : K_{k,i} \in \mathcal{F}_k\}$ belong to the
same homeomorphism type, independent of the knot $K_{k,i}$. Now we
simply claim that the knot surgery performed on $W_n(m)$ to obtain
$W_n(m)_{K_{k,i}}$ essentially affects the complement of the removed
neighborhood of the regular fiber union the $(-2)$-sphere section,
and hence it does not have any effect on the homeomorphism type of
the ``remaining" Stein fillings. So the strategy is to start with a
homeomorphism of the closed $4$-manifolds including the Stein
fillings, and verify that it will ``descend" to a homoeomorphism of
the Stein fillings when we remove a piece from each after performing
a Fintushel-Stern knot surgery.

More precisely, first note that in $W_n(m)$, a tubular neighborhood
of the $(-2)$-sphere section is disjoint from the cusp neighborhood
(see the proof of Proposition~\ref{hom}) including the torus $T$
given above. Moreover, the cusp neighborhood intersects with a
tubular neighborhood of a regular fiber along two disjoint copies of
$D^2 \times D^2$. Since the initial homeomorphism in
Proposition~\ref{hom} is identity on the complement of the cusp
neighborhood, we can delete these configurations, except the two
copies of $D^2 \times D^2$, without affecting our homeomorphism.
Performing knot surgery on $T$ turns these two disk bundles into
$\accentset{\circ}{\Sigma}_k \times D^2$, where
$\accentset{\circ}{\Sigma}_k$ denotes a genus $k$ surface with one
disk removed. Since any homeomorphism of
$\partial(\accentset{\circ}{\Sigma_k} \times D^2)$ extends, we can
delete these two $D^2 \times \accentset{\circ}{\Sigma}_k$ as well so
that the homeomorphism descends to the Stein fillings. \end{proof}

\begin{corollary} For each $h \geq 7$, the Seifert fibered singularity link
$Y_{h,(2)}$ with its canonical contact structure $\xi_{h,(2)}$
admits
\begin{itemize}
\item an infinite family of exotic simply-connected Stein fillings,

\item for each positive integer $n$, an infinite family of exotic
Stein fillings whose fundamental group is $\mathbb{Z} \oplus
\mathbb{Z}_{n}$, and

\item for each positive integer $n$, a Stein filling whose first
homology group is $\mathbb{Z}^{h-2} \oplus \mathbb{Z}_{n}$.
\end{itemize} In particular, none of the Stein fillings in the last two items are
homeomorphic to a Milnor fiber of the singularity.
\end{corollary}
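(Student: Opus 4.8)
The plan is to obtain the three items by specializing the constructions of Theorems~\ref{sim} and~\ref{sima} to the single-fibre case $\overline{p}=(2)$, and then to settle the final clause by a single homeomorphism invariant. Fix $h\geq 7$ throughout. For the first item I would invoke Theorem~\ref{sim} with $r=1$: writing $h=g+2k$ with $k=2$ and $g=h-4\geq 3$ (so that $g\geq 2$, $k\geq 2$ and $1\leq r=1\leq 4g+3$), the family $\mathcal{S}_{g,2,1}$ produced there is already an infinite collection of exotic simply-connected Stein fillings of $(Y_{h,(2)},\xi_{h,(2)})$. For the second item I would feed the same bookkeeping $h=g+2k=g+4$ into Theorem~\ref{sima}; the only delicate point is parity, since $h\geq 7$ forces $g=h-4\geq 3$ but permits $g$ even, whereas the text only wrote out $g=2m+1$. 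Here I would take as input the genus-$g$ Lefschetz fibrations with $\pi_1=\mathbb{Z}\oplus\mathbb{Z}_n$ supplied by \cite{k} for \emph{every} $g\geq 3$, run the argument of Theorem~\ref{sima} with this input, and remove one regular fibre together with a single $(-2)$-section to obtain, for each $n$, an infinite exotic family of fillings of $(Y_{h,(2)},\xi_{h,(2)})$ with fundamental group $\mathbb{Z}\oplus\mathbb{Z}_n$.

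For the third item I must produce, for each $n$, a Stein filling of the \emph{same} contact manifold whose first homology is $\mathbb{Z}^{h-2}\oplus\mathbb{Z}_n$, so now the free rank is large rather than equal to $1$. The approach I would take is to repeat the twisted-fibre-sum-plus-knot-surgery recipe, but to replace the $\pi_1$-collapsing Luttinger surgery of the proof of Proposition~\ref{hom} by a milder modification that creates the $\mathbb{Z}_n$-torsion while leaving enough generators of the fibre's first homology alive, so that the closed total space carries first homology of rank $h-2$. Proposition~\ref{palf} with $\overline{p}=(2)$ (delete one regular fibre and one $(-2)$-section) then yields a Stein filling of $(Y_{h,(2)},\xi_{h,(2)})$, its contact structure being canonical by Lemma~\ref{can}, and a Seifert--Van Kampen computation modelled on the one in Theorem~\ref{sim} should identify its first homology as $\mathbb{Z}^{h-2}\oplus\mathbb{Z}_n$. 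I expect this homology bookkeeping to be the main obstacle: the homology of the closed total space is easy, but deleting both the fibre and the section alters $H_1$, and one has to track the images of the meridians of the removed fibre and section precisely in order to land on exactly $\mathbb{Z}^{h-2}\oplus\mathbb{Z}_n$. As corroboration that the target is attainable I would note that the methods of \cite{ako}, which realize an arbitrary finitely presented group as the fundamental group of exotic Stein fillings of Seifert fibered singularity links, already put $\mathbb{Z}^{h-2}\oplus\mathbb{Z}_n$ within reach.

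Finally I would dispatch the ``in particular'' clause with a single homeomorphism invariant. By Lemma~\ref{sin} the link $Y_{h,(2)}$ bounds a normal isolated complex surface singularity, so every Milnor fibre $F$ of a smoothing of that singularity is the Milnor fibre of a normal surface singularity, whence $b_1(F)=0$ by the theorem of Greuel and Steenbrink. The fillings of the second item have $H_1=\mathbb{Z}\oplus\mathbb{Z}_n$ and hence $b_1=1$, while the filling of the third item has $b_1=h-2\geq 5$. Since $b_1$ is a homeomorphism invariant and is strictly positive in both cases, none of the fillings in the last two items can be homeomorphic to a Milnor fibre of the singularity. I would stress that this argument deliberately leaves the first item untouched: those fillings are simply connected with $b_1=0$, so the Greuel--Steenbrink obstruction does not apply to them, which is consistent with the statement of the corollary.
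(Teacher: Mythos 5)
Your handling of the first two items and of the final clause tracks the paper's own proof: item one is Theorem~\ref{sim} specialized to $r=1$ with $h=g+2k$, item two is Theorem~\ref{sima} with the same bookkeeping, and the ``in particular'' clause is exactly the Greuel--Steenbrink vanishing of $b_1$ for Milnor fibers of normal surface singularities applied to fillings with $b_1=1$ and $b_1=h-2$. Your parity remark on item two is a genuinely useful catch: as written, Theorem~\ref{sima} takes $g=2m+1$ odd, so $h=2(m+k)+1$ is forced to be odd, and covering even $h\geq 8$ does require running the argument with the even-genus Lefschetz fibrations of \cite{k} (two disjoint $(-1)$-sections, the sphere fibration giving the torus $T$, and the Luttinger description all need to be checked in that case); the paper elides this point entirely.

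The genuine gap is in the third item. You do not produce the filling with $H_1=\mathbb{Z}^{h-2}\oplus\mathbb{Z}_n$: you propose to replace the Luttinger surgery by an unspecified ``milder modification'' that keeps the free rank high while creating the $\mathbb{Z}_n$-torsion, and you yourself flag the $H_1$ bookkeeping after deleting the fiber and the section as an unresolved obstacle. That is a plan, not a proof, and the appeal to \cite{ako} does not close it, since realizing $\mathbb{Z}^{h-2}\oplus\mathbb{Z}_n$ as $\pi_1$ by those methods gives no control over which singularity link $Y_{h,(2)}$ (i.e.\ which page genus $h$) the resulting filling bounds. The paper disposes of this item in one line by citing the earlier Ozbagci--Stipsicz construction \cite{os1} of Stein fillings of these open books with precisely this first homology; if you want a self-contained argument instead of that citation, you must actually specify the surgery and carry out the Mayer--Vietoris/Seifert--Van Kampen computation for the complement of the fiber and the $(-2)$-section, which your write-up does not do.
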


\begin{proof} Recall that, with respect to our notation in Section~\ref{iso},
$Y_{h,(2)}$ denotes the plumbing of $\Sigma_h \times D^2$  with an
oriented circle bundle over $S^2$ whose Euler number is $2$. It
follows that $Y_{h,(2)}$ is a Seifert fibered $3$-manifold over a
genus $h$ surface with a unique singular fiber of multiplicity $2$.

For any $h \geq 6$, an infinite family of simply connected,
homeomorphic but pairwise non-diffeomorphic Stein fillings of the
singularity link $(Y_{h,(2)}, \xi_{h,(2)})$ is given in
Theorem~\ref{sim}. Similarly, according to Theorem~\ref{sima}, for
any $h=g+2k\geq 7 $, and for each positive integer $n$, $(Y_{h,(2)},
\xi_{h,(2)})$ admits an infinite family of homeomorphic but pairwise
non-diffeomorphic Stein fillings with fundamental group $\mathbb{Z}
\oplus \mathbb{Z}_{n}$. The third family of Stein fillings is given
in \cite{os1}. In addition, none of the Stein fillings in the last
two items are homeomorphic to any Milnor fiber of the singularity,
since a Milnor fiber of a normal surface singularity has vanishing
first Betti number \cite{grs}.\end{proof}

\v  \noindent {\bf {Acknowledgement}}:  The authors would like to
thank the referee for his careful reading of the manuscript and
his/her suggestions that improved the presentation greatly.  This
work was initiated at the FRG workshop ``Topology and Invariants of
Smooth 4-manifolds" in Miami, USA and was completed  at the
``Invariants in Low-Dimensional Topology and Knot Theory" workshop
held in the Oberwolfach Mathematics Institute in Germany. We are
very grateful to the organizers of both workshops for creating a
very stimulating environment.  A. A. was partially supported by NSF
grants FRG-0244663, DMS-1005741 and a Sloan Fellowship. B.O. was
partially supported by the Marie Curie International Outgoing
Fellowship 236639.

\end{document}